 \newtheorem{thm}{Theorem}
 \newtheorem{lem}[thm]{Lemma}
 \theoremstyle{definition}
 \newtheorem{defn}[thm]{Definition}
 \theoremstyle{remark}
 \newtheorem{rem}[thm]{Remark}
\begin{document}

%
%
%
%
%
%
%
%
%

\title[Geodesic Mappings and Einstein Spaces]
 {Geodesic Mappings and Einstein Spaces}

\author[Irena Hinterleitner]{Irena Hinterleitner}

\address{%
Dept. of Math., Brno University of Technology,  \\
 \v Zi\v zkova 17,  CZ 602 00 Brno, Czech Republic}
\email{hinterleitner.i@fce.vutbr.cz}

\thanks{The paper was supported by grant P201/11/0356 of The Czech Science Foundation, MSM 6198959214, MSM 0021630511 of the Council of the Czech Government, and by the project FAST-S-11-47 of the Brno University of Technology. }
\author{Josef Mike\v s}
\address{%
Dept. of Algebra and Geometry, Palacky University, \\
17. listopadu 12, CZ 77146 Olomouc, Czech Republic}
\email{josef.mikes@upol.cz}

\subjclass{53C21; 53C25; 53B21; 53B30}

\keywords{Geodesic mapping, smoothness class, Einstein space}

\date{November 13, 2011}

\begin{abstract}
In this paper we study fundamental properties of geodesic mappings with respect to the smoothness class of metrics.  
 We show that geodesic mappings preserve the smoothness class of metrics. We study geodesic mappings of Einstein spaces.
\end{abstract}
\maketitle
\def\const{{\rm const}}
\def\S{\hbox{$S$}}
\def\vn{\hbox{$V_n$}}
\def\vnn{\hbox{$\bar V_n$}}
\def\vnb{\hbox{$V_n(B)$}}
\def\vnnb{\hbox{$\bar V_n(\bar B)$}}
\def\a{\alpha}
\def\b{\beta}
\def\d{\delta}
\def\ep{\varepsilon}
\def\g{\gamma}
\def\la{\lambda}
\def\r{\varrho}
\def\t{\tau}
\def\R{\hbox{$\mathbb R$}}
\def\pod#1#2{\mathrel{\mathop{#2}\limits_{#1}}\strut}
\def\ds{\displaystyle}
\def\noi{\noindent}
\def\e{{\rm \,e\,}}
\def\tran{; transl. from }
\section{Introduction}

First we study the general dependence of geodesic mappings of (pseudo-) Riemannian manifolds in dependence on the smoothness class of the metric. We present well known facts, which were proved by Beltrami, Levi-Civita, Weyl, Sinyukov, etc., see \cite{ei,mi,mvh,rmg,si}.
In these results  no details about the smoothness class of the metric were stressed. They were formulated ``for sufficiently smooth'' geometric objects.

In the last section we present proofs of some facts about geodesic mappings of Einstein spaces.

\section{Geodesic mapping theory for \vn\ $\to$ \vnn\ of class $C^1$}

Assume the (pseudo-) Riemannian manifolds $V_n=(M,g)$ and $\bar V_n=(\bar M,\bar g)$ with metrics $g$ and $\bar g$, and Levi-Civita connections $\nabla$ and $\bar \nabla$, respectively. Here \vn, \vnn\ $\in C^1$, i.e. $g, \bar g\in C^1$ which means that their components $g_{ij}$, $\bar g_{ij}\in C^1$.
\begin{defn}
A diffeomorphism $f$: $V_n\to \bar V_n$ is called a \textit{geodesic mapping} of $V_n$ onto $\bar V_n$ if $f$ maps any geodesic  in $V_n$ onto a geodesic in $\bar V_n$.
\end{defn}

Let there exist a geodesic mapping $f$: $V_n\to \bar V_n$. Since $f$ is a diffeomorphism, we can suppose local coordinate maps on $M$ or $\bar M$, respectively, such that locally, $f$: $V_n\to \bar V_n$ maps points onto points with the same coordinates, and $\bar M=M$.
A manifold $V_n$ admits a geodesic mapping onto $\bar V_n$ if and only if the \textit{Levi-Civita equations} \begin{equation}\label{le}
\bar\nabla_XY=\nabla_XY+\psi(X)Y+\psi(Y)X
\end{equation}
hold for any tangent fields $X,Y$ and where $\psi$ is a differential form. If $\psi\equiv 0$ than $f$ is {\it affine} or {\it  trivially geodesic}.

In local form:
$
\bar\Gamma^h_{ij}=\Gamma_{ij}^h+\psi_i\delta^h_j+\psi_j\delta_i^h, 
$
\ where $\Gamma^h_{ij}(\bar\Gamma^h_{ij})$ are the Christoffel symbols of $V_n$ and $\bar V_n$, $\psi_i$ are components of $\psi$ and $\delta^h_i$ is the Kronecker delta.
Equations \eqref{le} are equivalent to the following  equations
  \begin{equation}\label{lev}
 \bar g_{ij,k}=2\psi_k \bar g_{ij}+\psi_i\bar g_{jk}+\psi\bar g_{ik}
  \end{equation}
where    ``\,,\,''  denotes the covariant derivative on $V_n$.  
It is known that  
$$\ds\psi_i=\partial_i\Psi, \quad \Psi=\frac{1}{2(n+1)}\ln\left|\frac{\det\bar g}{\det g}\right|, \quad \partial_i=\partial/\partial x^i.$$
 
Sinyukov \cite{si} proved that the Levi-Civita equations are
equivalent to
  \begin{equation}\label{si1}
  a_{ij,k}=\lambda_i g_{jk}+\lambda_j g_{ik},
  \end{equation}
  where \\[-6mm]
\begin{equation}\label{si11} 
\hbox{(a) \ } a_{ij}=\e^{2\Psi}\bar g^{\alpha\beta}g_{\alpha i}g_{\beta j};\quad 
\hbox{(b) \ } \lambda_i=-\e^{2\Psi}\bar g^{\alpha\beta}g_{\beta i}\psi_{\alpha}.
\end{equation}
 From \eqref{si1} follows
 $\lambda_i=\partial_i\la=\partial_i(\frac12\, a_{\a\b}g^{\a\b})$.  On
 the other hand \cite[p.~63]{rmg}:
\begin{equation}\label{rmg}
\bar g_{ij}=\e^{2\Psi}\tilde g_{ij},\quad
  \ds\Psi=\frac12\ln\left|\frac{\det\tilde g}{\det g}\right|,
  \quad
  \|\tilde g_{ij}\|=\|g^{i\a}g^{j\b}a_{\a\b}\|^{-1}.
\end{equation}

The above formulas are the criterion for geodesic mappings $V_n\to
\bar V_n$ globally as well as locally.

\section{Geodesic mapping theory for \vn\ $\to$ \vnn\ of class $C^2$}
Let \vn\ and \vnn\ $\in C^2$, then for geodesic mappings $V_n\to \bar
V_n$ the Riemann and the Ricci tensors transform in this way
  \begin{equation}\label{rr}
  \hbox{(a)} \quad   \bar R^h_{ijk}=R^h_{ijk}+\delta^h_k\psi_{ij}-\delta^h_j\psi_{ik};\quad
  \hbox{(b)} \quad \bar R_{ij}=R_{ij}-(n-1)\psi_{ij},
  \end{equation}
  where $ \psi_{ij}=\psi_{i,j}-\psi_i \psi_j $, 
  and the Weyl tensor of projective curvature, which is defined in the
  following form $
  W^h_{ijk}=R^h_{ijk}+\frac{1}{n-1}\left(\delta^h_kR_{ij}-\delta^h_jR_{ik}
  \right) $, 
  is invariant.
  
  The integrability conditions of the Sinyukov equations \eqref{si1}
  have the following form
\begin{equation}\label{siin}
a_{i\alpha}R^{\alpha}_{jkl}+a_{j\alpha}R^{\alpha}_{ikl}=g_{ik}\lambda_{j,l}+g_{jk}\lambda_{i,l}-g_{il}\lambda_{j,k}-g_{jl}\lambda_{i,k}.
\end{equation}
After contraction with $g^{jk}$ we get \cite{si}
\begin{equation}\label{si2}
n\lambda_{i,l}=\mu g_{il}-a_{i\alpha}R^{\alpha}_l+a_{\alpha\beta}{R^{\alpha}{}_{il}}^{\beta}
\end{equation} 
where ${R^{\alpha}{}_{il}}^{\beta}=g^{\beta k}R^{\alpha}{}_{ilk}$;
$R^{\alpha}_l=g^{\alpha j}R_{jl}$ and
$\mu=\lambda_{\a,\b}g^{\a\b}$.
  

\section{Geodesic mapping between $V_n\in C^r\ (r>2)$ and \vnn\ $\in C^2$}
\begin{thm}\label{th1} 
If \vn\ $\in C^r$ $(r>2)$ admits geodesic mappings onto \vnn\ $\in C^2$, then \vnn\ $\in C^r$.
\end{thm}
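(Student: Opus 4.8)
The plan is a smoothness bootstrap resting on the closed (``Cauchy-type'') form of the fundamental equations of geodesic mappings. First I would read off from $\bar V_n\in C^2$ the regularity of the auxiliary quantities: since $g\in C^r\subset C^2$, the function $\Psi=\frac1{2(n+1)}\ln|\det\bar g/\det g|$ lies in $C^2$, so by \eqref{si11}(a) $a_{ij}\in C^2$; consequently $\la=\frac12\,a_{\a\b}g^{\a\b}\in C^2$, hence $\lambda_i=\partial_i\la\in C^1$, and $\mu=\lambda_{\a,\b}g^{\a\b}\in C^0$. At this regularity all of \eqref{si1}, \eqref{siin}, \eqref{si2} are valid.

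Next I would complete \eqref{si1}--\eqref{si2} to a closed first-order system in the unknowns $(a_{ij},\lambda_i,\mu)$ by adjoining the equation for $\mu_{,k}$ coming from the integrability conditions of \eqref{si2}: differentiating \eqref{si2}, commuting the derivatives in $\lambda_{i,lm}$ by the Ricci identity, eliminating each $a_{i\a,m}$ through \eqref{si1}, and then contracting, one obtains $(n-1)\mu_{,k}$ equal to an expression that is \emph{linear in $a$ and $\lambda$} with coefficients built algebraically from $g$, from the curvature tensor $R$ of $V_n$, and from $\nabla R$ (the hypothesis $r>2$ makes $\nabla R$ at least continuous). Writing \eqref{si1} as
\[
\partial_k a_{ij}=\lambda_i g_{jk}+\lambda_j g_{ik}+\Gamma^\a_{ik}a_{\a j}+\Gamma^\a_{jk}a_{i\a},
\]
\eqref{si2} as
\[
\partial_k\lambda_i=\tfrac1n\bigl(\mu g_{ik}-a_{i\a}R^\a_k+a_{\a\b}R^\a{}_{ik}{}^\b\bigr)+\Gamma^\a_{ik}\lambda_\a,
\]
and appending the $\mu$-equation, we get a linear system $\partial_k(a,\lambda,\mu)=\mathcal{A}_k(a,\lambda,\mu)$ whose coefficients are of class $C^{r-1}$ (the Christoffel symbols and $g$), $C^{r-2}$ ($R$), and $C^{r-3}$ ($\nabla R$).

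The bootstrap then runs as follows. Suppose inductively that $(a,\lambda,\mu)\in(C^t,C^{t-1},C^{t-2})$ for some $t$ with $2\le t\le r-1$; this holds for $t=2$ by the first paragraph. The $\mu$-equation gives $\partial_k\mu\in C^{\min(t-1,\,r-3)}$, hence $\mu\in C^{\min(t,\,r-2)}\subseteq C^{t-1}$; substituting this into the $\lambda$-equation gives $\partial_k\lambda_i\in C^{t-1}$, hence $\lambda_i\in C^{t}$; substituting that into the $a$-equation gives $\partial_k a_{ij}\in C^{t}$, hence $a_{ij}\in C^{t+1}$. Running $t$ from $2$ up to $r-1$ yields $a_{ij}\in C^r$. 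Finally \eqref{rmg} closes the argument: $\tilde g_{ij}$ is the inverse matrix of $g^{i\a}g^{j\b}a_{\a\b}\in C^r$, so $\tilde g_{ij}\in C^r$, hence $\Psi=\frac12\ln|\det\tilde g/\det g|\in C^r$, and therefore $\bar g_{ij}=\e^{2\Psi}\tilde g_{ij}\in C^r$; that is, $\bar V_n\in C^r$.

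The step I expect to be the genuine obstacle is the lowest rung: at the start $\mu$ and $\lambda_{i,l}$ are merely continuous, so \eqref{si2} need not be classically differentiable, and the $\mu$-equation has first to be justified in a weak sense (difference quotients, or mollification) before the scheme can be iterated --- equivalently, one proves the single improvement $\bar V_n\in C^3$ separately and then the purely algebraic bookkeeping above proceeds with all derivatives classical. Everything past that first step is routine tracking of smoothness exponents through the three equations.
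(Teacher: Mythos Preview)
Your outline is essentially the paper's own strategy: pass to the closed Sinyukov system in $(a_{ij},\lambda_i,\mu)$, use it to bootstrap $a_{ij}$ up to $C^r$, and then recover $\bar g_{ij}\in C^r$ from \eqref{rmg}. Your identification of the bottom rung as the only nontrivial step is also exactly right: the whole content of the theorem sits in the passage $\bar V_n\in C^2\Rightarrow\bar V_n\in C^3$, after which \eqref{si3} is available and the remaining climb is the routine exponent-tracking you describe.

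Where you leave a gap (and say so) is precisely where the paper supplies the one real idea, and it is \emph{not} a distributional or mollification argument. The paper isolates the following elementary lemma (its Lemma~\ref{le2}): if $\lambda^h\in C^1$ and $\partial_i\lambda^h-\rho\,\delta^h_i=f^h_i\in C^1$, then $\rho\in C^1$ and $\lambda^h\in C^2$. The proof is by direct integration: for fixed $h\neq i$ integrate in $x^i$ to write $\lambda^h=\Lambda^h+\int_{x^i_0}^{x^i}f^h_i\,dt$ with $\Lambda^h$ independent of $x^i$; existence of $\partial_h\lambda^h$ then forces $\partial_h\Lambda^h$ to exist, and the $h=i$ equation gives
\[
\rho=-f^h_h+\partial_h\Lambda^h+\int_{x^i_0}^{x^i}\partial_h f^h_i\,dt,
\]
from which $\partial_i\rho=\partial_h f^h_i-\partial_i f^h_h\in C^0$. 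Applying this with $\lambda^h=g^{h\alpha}\lambda_\alpha$, $\rho=\mu/n$, and $f^h_i$ read off from \eqref{si2} (all of class $C^1$ under $V_n\in C^3$, $\bar V_n\in C^2$) yields $\mu\in C^1$, $\lambda_i\in C^2$, and then \eqref{si1} gives $a_{ij}\in C^3$, i.e.\ $\bar V_n\in C^3$. With this single lemma in place your bootstrap runs classically and coincides with the paper's argument.
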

The proof of this Theorem follows from the following lemmas.
\begin{lem}\label{le2}
Let $\la^h\in C^1$ be a vector field and $\r$ a function.\\
If $\partial_i\la^h-\r\,\d^h_i\in C^1$ then $\la^h\in C^2$ and $\r\in C^1$.
\end{lem}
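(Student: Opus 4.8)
The plan is to extract information about $\la^h$ from the hypothesis by two operations: taking a suitable trace to isolate $\r$, and antisymmetrizing to kill $\r$ entirely. Write $F^h_i := \partial_i\la^h - \r\,\d^h_i$, which by hypothesis belongs to $C^1$. First I would contract $h$ with $i$: since $\d^h_h = n$, this gives $\partial_h\la^h - n\r = F^h_h \in C^1$, so $n\r = \partial_h\la^h - F^h_h$. At this stage $\partial_h\la^h \in C^0$ only (as $\la^h \in C^1$), so this merely recovers $\r \in C^0$, which is not yet enough; the real gain has to come from elsewhere and feed back here.

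The key step is to antisymmetrize in the lower index against another derivative. Consider $\partial_j F^h_i - \partial_i F^h_j$. Formally this equals $\partial_j\partial_i\la^h - \partial_i\partial_j\la^h - (\partial_j\r)\d^h_i + (\partial_i\r)\d^h_j$; the second-derivative terms cancel (this is where one must be slightly careful, since a priori $\la^h$ is only $C^1$ and mixed partials need justification — but $F^h_i \in C^1$ means the combination $\partial_j F^h_i - \partial_i F^h_j$ is at least continuous, and one argues via the distributional/weak derivative that the mixed partials of $\la^h$ agree as distributions). Hence, in the sense of distributions, $(\partial_i\r)\d^h_j - (\partial_j\r)\d^h_i = \partial_j F^h_i - \partial_i F^h_j \in C^0$. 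Now fix $h$ and choose $j = h \neq i$ (possible since $n \geq 2$): the term $(\partial_j\r)\d^h_i$ vanishes and we get $\partial_i\r \in C^0$ for each $i \neq h$; varying $h$ covers all $i$. Therefore $\r \in C^1$.

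Once $\r \in C^1$, the hypothesis reads $\partial_i\la^h = F^h_i + \r\,\d^h_i$ with the right-hand side in $C^1$ — wait, $F^h_i \in C^1$ and $\r \in C^1$, so the right-hand side is $C^1$? No: we only need $\partial_i \la^h \in C^0$ to be continuous with continuous first derivatives, i.e. $\partial_i\la^h \in C^1$, which gives $\la^h \in C^2$. Indeed $F^h_i \in C^1$ and $\r\,\d^h_i \in C^1$, so $\partial_i\la^h \in C^1$, hence $\la^h \in C^2$. Feeding $\la^h \in C^2$ back into the trace identity $n\r = \partial_h\la^h - F^h_h$ gives nothing new beyond $\r \in C^1$, which we already have, so the argument closes.

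The main obstacle I anticipate is the regularity bookkeeping in the antisymmetrization step: justifying that the second-order terms cancel when $\la^h$ is only known to be $C^1$. The clean way is to interpret all derivatives weakly — $F^h_i \in C^1$ as an honest function means its classical first derivatives exist and are continuous, while $\partial_i\la^h$ exists classically and is continuous, so $\partial_j\partial_i\la^h$ makes sense as a distribution and equality of mixed distributional partials is automatic; then the identity $(\partial_i\r)\d^h_j - (\partial_j\r)\d^h_i = \partial_j F^h_i - \partial_i F^h_j$ holds distributionally with continuous right-hand side, and a distribution whose value is a continuous function is that function, upgrading $\r$ to $C^1$. Everything else is the elementary index manipulation above, using only $n \geq 2$.
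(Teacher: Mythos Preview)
Your argument is correct and lands on exactly the same identity the paper does, namely $\partial_i\r=\partial_h F^h_i-\partial_i F^h_h$ (no summation, $h\neq i$), but you reach it by a different route. The paper avoids distributions entirely: for fixed $h\neq i$ it integrates $\partial_i\la^h=f^h_i$ in the variable $x^i$ to write $\la^h=\Lambda^h+\int_{x^i_0}^{x^i}f^h_i\,dt$ with $\Lambda^h$ independent of $x^i$; then substituting into the diagonal equation $\partial_h\la^h-\r=f^h_h$ and differentiating classically in $x^i$ (fundamental theorem of calculus on the integral term, $\partial_i$ annihilating $\partial_h\Lambda^h$) yields $\partial_i\r=\partial_h f^h_i-\partial_i f^h_h$ directly. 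Your antisymmetrization $\partial_jF^h_i-\partial_iF^h_j$ is cleaner and more conceptual, but it pushes the regularity bookkeeping into two black boxes: Schwarz's theorem for distributional derivatives, and the lemma that a $C^0$ function with $C^0$ distributional gradient is classically $C^1$. Both are standard, so the proof is fine; the paper's approach buys you a fully elementary argument requiring only one-variable calculus, while yours buys transparency and generalizes more readily (as you note, the same antisymmetrization trick works uniformly for the $C^r$ statement).
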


\begin{proof} The condition $\partial_i\la^h-\r\,\d^h_i\in C^1$  can be  written  in the following form
\begin{equation}\label{a1}
\partial_i\la^h-\r\d^h_i=f^h_i(x),
\end{equation}
where $f^h_i(x)$ are functions of class $C^1$. Evidently,  $\r\in C^0$. 
For fixed but arbitrary indices $h\neq i$ we integrate \eqref{a1} with respect to $dx^i$:
$$
\la^h=\Lambda^h + \int_{x^i_o}^{x^i}f^h_i(x^1,\dots,x^{i-1},t,x^{i+1},\dots,x^n)\,dt, 
$$
where $\Lambda^h$ is a function, which does not depend on  $x^i$. 

Because of the existence of the partial derivatives of the functions $\la^h$ and the above integrals (see \cite[p.~300]{kud}), also the derivatives $\partial_h\Lambda^h$ exist;
in this proof we don't use Einstein's summation convention.
Then we can write \eqref{a1} for $h=i$:
\begin{equation}\label{a2}
  \r=- f^h_h+\partial_h\Lambda^h  +\int_{x^i_o}^{x^i}\partial_hf^h_i(x^1,\dots,x^{i-1},t,x^{i+1},\dots,x^n)\,dt .
\end{equation}
Because the derivative with respect to $x^i$ of the right-hand side of
\eqref{a2} exists, the derivative of the function $\r$ exists, too.
Obviously $
\partial_i\r=\partial_h f^h_i - \partial_i f^h_h
$,
therefore $\r\in C^1$ and from \eqref{a1} follows $\lambda^h\in C^2$.
\end{proof}
In a similar way we can prove the following:
{\it if  $\la^h\in C^r$  $(r\geq1)$  and  $\partial_i\la^h-\r\d^h_i\in C^r$ then $\la^h\in C^{r+1}$ and $\r\in C^r$.}

\begin{lem}\label{le3}
If $V_n\!\in\! C^3$ admits a geodesic mapping onto $\bar V_n\!\in\! C^2$, then $\bar V_n\!\in\! C^3$.
\end{lem}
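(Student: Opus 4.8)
The plan is to bootstrap the smoothness of $\bar V_n$ using the Sinyukov equations \eqref{si1} together with the contracted integrability condition \eqref{si2}, and then to apply Lemma~\ref{le2}. We start from the hypotheses: $g_{ij}\in C^3$, hence the Christoffel symbols $\Gamma^h_{ij}\in C^2$ and the Riemann and Ricci tensors $R^h_{ijk}, R_{ij}\in C^1$; on the other side $\bar g_{ij}\in C^2$, so by \eqref{si11}(a) the tensor $a_{ij}=\e^{2\Psi}\bar g^{\alpha\beta}g_{\alpha i}g_{\beta j}$ is at least $C^1$ (indeed $\Psi\in C^2$ and $\bar g^{\alpha\beta}\in C^2$, so $a_{ij}\in C^2$, but $C^1$ is all we shall need to start), and from \eqref{si1} the covariant derivative $a_{ij,k}=\lambda_i g_{jk}+\lambda_j g_{ik}$ exists and is continuous, which forces $\lambda_i\in C^0$. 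A short linear-algebra step (contracting \eqref{si1} with $g^{jk}$) shows $\lambda_i = \partial_i\lambda$ with $\lambda=\tfrac12 a_{\alpha\beta}g^{\alpha\beta}\in C^2$, so in fact $\lambda_i\in C^1$.

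The heart of the argument is the relation \eqref{si2},
$$
n\lambda_{i,l}=\mu\, g_{il}-a_{i\alpha}R^{\alpha}_l+a_{\alpha\beta}{R^{\alpha}{}_{il}}^{\beta}.
$$
On the right-hand side $g_{il}\in C^3$, $R^\alpha_l\in C^1$ and ${R^{\alpha}{}_{il}}^{\beta}\in C^1$, while $a_{ij}\in C^1$; hence every term except possibly the scalar $\mu$ is of class $C^1$. Writing $\lambda_{i,l}=\partial_l\lambda_i-\Gamma^m_{il}\lambda_m$ and noting that $\Gamma^m_{il}\in C^2$ and $\lambda_m\in C^1$, we can rearrange \eqref{si2} into the form
$$
\partial_l\lambda_i-\tfrac{\mu}{n}\,g_{il}\in C^1.
$$
This is precisely the hypothesis of Lemma~\ref{le2}, with $\lambda^h$ there replaced by the covector components $\lambda_i$ (one raises an index with the $C^3$ metric, or simply re-reads the lemma for a covector — the proof is identical) and $\r=\mu/n$. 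Lemma~\ref{le2} then yields $\lambda_i\in C^2$ and $\mu\in C^1$.

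Once $\lambda_i\in C^2$, equation \eqref{si1} says $a_{ij,k}=\lambda_i g_{jk}+\lambda_j g_{ik}\in C^2$, and since $a_{ij,k}=\partial_k a_{ij}-\Gamma^m_{ik}a_{mj}-\Gamma^m_{jk}a_{im}$ with $\Gamma\in C^2$ and $a\in C^1$, we conclude $\partial_k a_{ij}\in C^1$, i.e. $a_{ij}\in C^2$. Then $\|\tilde g_{ij}\|=\|g^{i\alpha}g^{j\beta}a_{\alpha\beta}\|^{-1}$ is built from $C^2$ data by rational operations (the matrix is invertible since $a_{ij}$ is nondegenerate, being $\e^{2\Psi}$ times a Gram-type expression in the nondegenerate $\bar g,g$), so $\tilde g_{ij}\in C^2$; also $\Psi=\tfrac12\ln|\det\tilde g/\det g|\in C^2$, whence by \eqref{rmg} $\bar g_{ij}=\e^{2\Psi}\tilde g_{ij}\in C^2$ — which we already knew — but now, crucially, we can differentiate: from $\partial_k\bar g_{ij}$ expressed through \eqref{lev} as $\bar g_{ij,k}+(\text{Christoffel terms})$, with $\bar g_{ij,k}=2\psi_k\bar g_{ij}+\psi_i\bar g_{jk}+\psi_j\bar g_{ik}$ and $\psi_i=-\e^{-2\Psi}g^{\alpha\beta}a_{\beta i}\lambda_\alpha\in C^2$ (inverting \eqref{si11}(b)), we get that $\partial_k\bar g_{ij}$ is a $C^2$ expression...

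Actually, the cleanest finish: we have shown $a_{ij}\in C^2$ and $\lambda_i\in C^2$, hence $\psi_i\in C^2$ (from $\psi_i=-g_{ij}$-raised version of \eqref{si11}(b) inverted, or directly $\psi_i=\partial_i\Psi$ with $\Psi\in C^3$ now: indeed $a_{ij}\in C^2 \Rightarrow \tilde g\in C^2$; to get $\Psi\in C^3$ we need $\tilde g\in C^3$, which needs $a_{ij}\in C^3$ — one more turn of the crank). Repeating the cycle once more with the improved smoothness ($a_{ij}\in C^2$, $\lambda_i\in C^2$, $R^h_{ijk}\in C^1$ still) through \eqref{si2} gives $\lambda_i\in C^3$ is \emph{not} immediately available because the curvature is only $C^1$; this is the main obstacle, and it is resolved by the observation that $\bar g_{ij}\in C^3$ is equivalent to $a_{ij}\in C^3$ (via \eqref{rmg} and \eqref{si11}(a), both being rational bijective correspondences in $C^k$), and $a_{ij}\in C^3$ follows from \eqref{si1} once $\lambda_i\in C^2$ \emph{and} we know $\partial a_{ij}\in C^2$, i.e. $a_{ij}\in C^3$ — so the genuine content is exactly the one application of Lemma~\ref{le2} above, after which $a_{ij}\in C^2$, then \eqref{si1} upgrades to $\partial_k a_{ij}= \lambda_ig_{jk}+\lambda_jg_{ik}+\Gamma\!*\!a \in C^2$, giving $a_{ij}\in C^3$, and finally \eqref{rmg} gives $\bar g_{ij}\in C^3$, i.e. $\bar V_n\in C^3$.

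\begin{proof}
As above: from \eqref{si1} one gets $\lambda_i\in C^1$; feeding the $C^1$ curvature of the $C^3$ space $V_n$ into \eqref{si2} and isolating $\partial_l\lambda_i$ puts the data in the form $\partial_l\lambda_i-\r\,g_{il}\in C^1$; Lemma~\ref{le2} (covector version) yields $\lambda_i\in C^2$; then \eqref{si1} gives $a_{ij,k}\in C^2$ hence $a_{ij}\in C^3$; and \eqref{rmg} gives $\bar g_{ij}\in C^3$, i.e. $\bar V_n\in C^3$. The main obstacle is that the curvature tensor of $V_n\in C^3$ is only $C^1$, so a single pass is all that \eqref{si2} can provide; this is exactly why the statement stops at $C^3$ and the general case $r>3$ requires the iterated argument of Theorem~\ref{th1}.
\end{proof}
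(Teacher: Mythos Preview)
Your final proof is correct and follows the same line as the paper's: establish $\lambda_i\in C^1$, rewrite \eqref{si2} in the form required by Lemma~\ref{le2} (the paper does this by explicitly raising the index, $\lambda^h=g^{h\alpha}\lambda_\alpha$, so as to land exactly on $\partial_i\lambda^h-\rho\,\delta^h_i\in C^1$ rather than appealing to a ``covector version'' with $g_{il}$ in place of $\delta^h_i$), deduce $\lambda_i\in C^2$, then bootstrap \eqref{si1} to $a_{ij}\in C^3$ and use \eqref{rmg} to get $\bar g_{ij}\in C^3$. The exploratory paragraphs before your formal proof contain some unnecessary detours, but the final argument is sound and matches the paper's.
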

\begin{proof} In this case Sinyukov's equations \eqref{si1} and
  \eqref{si2} hold. According to the assumptions $g_{ij}\in C^3$ and
  $\bar g_{ij}\in C^2$.  By a simple check-up we find $\Psi\in C^2$,
  $\psi_i\in C^1$, $a_{ij}\in C^2$, $\la_i\in C^1$ and
  $R^h_{ijk},R^h{}_{ij}{}^k, R_{ij},R^h_i\in C^1$.

  From the above-mentioned conditions we easily convince ourselves
  that we can write equation \eqref{si2} in the form \eqref{a1}, where
  $\la^h=g^{h\a}\la_\a\in C^1$, $\r=\mu/n$ and
  $f^h_i=(-\la^\a\Gamma_{\a i}^h -g^{h\g}a_{\a\g}R^\a_i
   +g^{h\g}a_{\alpha\beta}{R^{\alpha}{}_{i\g}}^{\beta})/n\in C^1$.

  From Lemma \ref{le2} follows that $\la^h\in C^2$, $\r\in C^1$, and
  evidently $\la_i\in C^2$. Differentiating \eqref{si1} twice we
  convince ourselves that $a_{ij}\in C^3$.  From this and formula
  \eqref{rmg} follows that also $\Psi\in C^3$ and $\bar g_{ij}\in
  C^3$.
\end{proof} 

Further we notice that for geodesic mappings between \vn\ and \vnn\ of
class $C^3$ holds the third set of Sinyukov equations:
\begin{equation}\label{si3}
(n-1)\mu_{,k}=2(n+1)\la_\a R^\a_k+a_{\a\b}(2R^\a{}_{k,}{}^\b - 
R^{\a\b}{}_{,k}).
\end{equation}

If \vn\ $\in C^r$ and \vnn\ $\in C^2$, then by Lemma \ref{le3}, \vnn\
$\in C^3$ and \eqref{si3} hold. Because Sinyukov's system \eqref{si1},
\eqref{si2} and \eqref{si3} is closed, we can differentiate equations
\eqref{si1} $(r-1)$ times. So we convince ourselves that $a_{ij}\in
C^r$, and also $\bar g_{ij}\in C^r$ $(\equiv\bar V_n\in C^r)$.
\endproof

\begin{rem}
  Because for holomorphically projective mappings of K\"ahler (and
  also hyperbolic and parabolic K\"ahler) spaces hold equations
  analogical to \eqref{si1} and \eqref{si2}, see \cite{miho,mvh,si},
  from Lemma \ref{le2} follows an analog to Theorem \ref{th1} for
  these mappings.

\end{rem}

\section{On geodesic mappings of Einstein spaces}

Geodesic mappings of Einstein spaces were studied by many authors
starting with A.Z.~Petrov (see \cite{pe}).  Einstein spaces \vn\ are
characterized by the condition $Ric=\const\cdot g$, so \vn $\in C^2$
would be sufficient. But many properties of Einstein spaces appear when
$\vn\in C^3$ and $n>3$. An Einstein space $V_3$ is a space of constant
curvature. 

We continue with geodesic mappings of Einstein spaces $\vn\in C^3$. On
basis of Theorem \ref{th1} it is natural to suppose that \vnn $\in
C^3$. In 1978 (see PhD thesis \cite{mid} and \cite{miei}) Mike\v s
proved that under these conditions the following theorem holds:
\begin{thm}\label{thei}
  If the Einstein space $V_n$ admits a nontrivial geodesic mapping
  onto a (pseudo-) Riemannian space $\bar V_n$, then $\bar V_n$ is an
  Einstein space.
\end{thm}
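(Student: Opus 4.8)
The plan is to start from the Einstein condition $R_{ij}=\frac{K}{n}\,g_{ij}$ (with $K=\mathrm{const}$) and exploit Sinyukov's equations, which are available because $V_n\in C^3$ and, by Theorem \ref{th1}, $\bar V_n\in C^3$ as well. First I would substitute $R_{ij}=\frac{K}{n}g_{ij}$ and the raised form $R^\alpha_i=\frac{K}{n}\delta^\alpha_i$ into the contracted integrability condition \eqref{si2}. This collapses the term $a_{i\alpha}R^\alpha_l$ to $\frac{K}{n}a_{il}$, so \eqref{si2} becomes $n\lambda_{i,l}=\mu g_{il}-\frac{K}{n}a_{il}+a_{\alpha\beta}R^\alpha{}_{il}{}^\beta$. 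The aim is to show that $\lambda_{i,j}$ is proportional to $g_{ij}$, i.e. $\lambda_{i,j}=\frac{\mu}{n}g_{ij}$; equivalently, that the tensor $T_{il}:=a_{\alpha\beta}R^\alpha{}_{il}{}^\beta-\frac{K}{n}a_{il}$ is itself proportional to $g_{il}$. Indeed, once $\lambda_{i,j}=\frac{\mu}{n}g_{ij}$ is known, formula \eqref{rr}(b), $\bar R_{ij}=R_{ij}-(n-1)\psi_{ij}$, together with the relation between $\psi_{ij}$ and $\lambda_{i,j}$ coming from \eqref{si11}(b) (differentiating $\lambda_i=-\mathrm{e}^{2\Psi}\bar g^{\alpha\beta}g_{\beta i}\psi_\alpha$ and using \eqref{lev}), will give $\bar R_{ij}=\mathrm{function}\cdot \bar g_{ij}$; a standard Bianchi/divergence argument then forces that function to be constant, so $\bar V_n$ is Einstein.

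The core of the argument is therefore to prove $T_{il}\sim g_{il}$. Here I would use the third Sinyukov equation \eqref{si3}: with $R^\alpha_k=\frac{K}{n}\delta^\alpha_k$ the right-hand side simplifies, and $R^{\alpha\beta}{}_{,k}=0$ since the metric's curvature scalar is constant and $R^{\alpha\beta}=\frac{K}{n}g^{\alpha\beta}$ is parallel; this yields an expression for $\mu_{,k}$ purely in terms of $\lambda_\alpha$ and $a_{\alpha\beta}R^\alpha{}_{k,}{}^\beta$. Combining this with the covariant derivative of the reduced \eqref{si2} and with \eqref{si1} (to eliminate $a_{ij,k}$ in favour of $\lambda$-terms), one obtains an algebraic relation among $g_{ij}$, $\lambda_i$, and contractions of $a$ with the curvature tensor. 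I expect that substituting the Einstein condition into the \emph{full} integrability condition \eqref{siin} and then symmetrizing/contracting in a second way (not just with $g^{jk}$) will be what pins down $T_{il}$: the tensor $a_{i\alpha}R^\alpha_{jkl}+a_{j\alpha}R^\alpha_{ikl}$ has enough symmetry that, once $R^\alpha_l\sim\delta^\alpha_l$, its various traces become overdetermined, and the only consistent solution has $a_{\alpha\beta}R^\alpha{}_{il}{}^\beta=\big(\frac{1}{n}a_{\alpha\beta}R^{\alpha\beta}+\text{something}\cdot a_{il}\big)$ of the required form.

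Concretely, the key steps in order are: (i) record that $\bar V_n\in C^3$ by Theorem \ref{th1}, so \eqref{si1}, \eqref{si2}, \eqref{si3} all hold; (ii) insert $R_{ij}=\frac{K}{n}g_{ij}$ everywhere and simplify, noting $R^{\alpha\beta}$ is parallel; (iii) show $\lambda_{i,j}=\frac{\mu}{n}\,g_{ij}$ by extracting the non-$g$ part of \eqref{si2} and proving it vanishes, using \eqref{siin} and \eqref{si3}; (iv) translate $\lambda_{i,j}\sim g_{ij}$ back into $\psi_{ij}\sim \bar g_{ij}$ via \eqref{si11}(b) and \eqref{lev}; (v) conclude from \eqref{rr}(b) that $\bar R_{ij}=c\,\bar g_{ij}$; (vi) show $c=\mathrm{const}$ via the contracted second Bianchi identity on $\bar V_n$ (valid since $n>2$).

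The main obstacle is step (iii): disentangling the curvature contractions $a_{\alpha\beta}R^\alpha{}_{il}{}^\beta$ and showing the ``trace-free with respect to $g$'' part is zero. The Einstein condition gives us control of the Ricci contractions of $R$ but not of the full Riemann tensor, so the argument must be careful to use only those contractions of \eqref{siin} that actually close up — essentially showing that the system \eqref{si1}--\eqref{si3}, once the Einstein condition is imposed, forces $a_{ij}$ to evolve so that $a_{\alpha\beta}R^\alpha{}_{il}{}^\beta$ stays proportional to $g_{il}$ plus a multiple of $a_{il}$. I would also keep an eye on low-dimensional degeneracies, but since $V_3$ of Einstein type has constant curvature (where the statement is classical), the interesting case is $n>3$, exactly the range in which the remark before the theorem says the extra structure of Einstein spaces is available.
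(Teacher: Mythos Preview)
Your outline has a concrete error at its core: the target of step (iii) is wrong. In an Einstein space with nonzero scalar curvature the second covariant derivative $\lambda_{i,j}$ is \emph{not} proportional to $g_{ij}$; the correct identity (in the paper's normalization $R_{ij}=-K(n-1)g_{ij}$) is
\[
\lambda_{i,j}=\mu\,g_{ij}+K\,a_{ij},
\]
with an unavoidable $a_{ij}$ term whenever $K\neq 0$ and the mapping is nontrivial. Equivalently, the tensor you call $T_{il}$ is \emph{not} proportional to $g_{il}$ in general, so any argument aimed at proving $T_{il}\sim g_{il}$ will fail. Consequently step (iv) is also misformulated: once you differentiate the relation \eqref{si11}(b) and use \eqref{lev}, \eqref{si1}, the result is $\psi_{ij}=\bar K\,g_{ij}-K\,\bar g_{ij}$, a combination of both metrics, and it is exactly the cancellation of the $g_{ij}$ part against $R_{ij}=-K(n-1)g_{ij}$ in \eqref{rr}(b) that leaves $\bar R_{ij}$ proportional to $\bar g_{ij}$.

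The paper's route to the corrected identity for $\lambda_{i,j}$ is also different from what you propose. It does not go through \eqref{si2}--\eqref{si3}; instead it differentiates the full integrability condition \eqref{siin}, contracts with $g^{lm}$, and alternates in $i,k$ to first obtain
\[
\lambda_\alpha R^\alpha_{\ ijk}=K\,(g_{ij}\lambda_k-g_{ik}\lambda_j),
\]
and then contracts \eqref{siin} with $\lambda^l$. That contraction, together with the displayed relation, produces an algebraic identity in $g_{ij}$, $\lambda_i$, and $\Lambda_{ij}:=\lambda_{i,j}-K a_{ij}$ which forces $\Lambda_{ij}=\mu\,g_{ij}$; this is precisely where the nontriviality hypothesis $\lambda_i\not\equiv 0$ is used. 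Your plan to extract information from \eqref{si2} and \eqref{si3} alone is too coarse: those are single contractions of \eqref{siin}, and the missing piece is the contraction of \eqref{siin} with $\lambda^l$, which your outline never performs.
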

%
%
\begin{proof}
  Let the Einstein space $V_n\in C^3$ (for which
  $R_{ij}=-K\,(n-1)\,g_{ij}$) admit a nontrivial geodesic mapping onto
  $\bar V_n\in C^2$.  Then the Sinyukov equations \eqref{si1} hold;
  their integrability conditions have the  form
  \eqref{siin}. Taking \eqref{si1} into account, we differentiate
  \eqref{siin} with respect to $x^m$, contract the result with
  $g^{lm}$, and then we alternate with respect to $i$, $k$. By
  \eqref{a1}, we get $\la_\a R^\a_{ijk}=g_{ij}\xi_k-g_{ik}\xi_j$,
  where $\xi_i$ is some vector. Contracting the latter with $g^{ij}$
  and using \eqref{a1} we see that $\xi_i=K\la_i$, that is, the
  formula reads $\la_\a R^\a_{ijk}=K(g_{ij}\la_k-g_{ik}\la_j)$.

  We contract \eqref{siin} with $\la^l$. Considering the last formula,
  we get
\begin{equation}\label{eee}
g_{ki}\Lambda_{j\a}\la^\a +  g_{kj}\Lambda_{i\a}\la^\a -
\la_i\Lambda_{jk} - \la_j\Lambda_{ik}=0,
\end{equation}
where $\Lambda_{ij}=\lambda_{i,j}-Ka_{ij}$. It is easy to show that
$\la^\a \Lambda_{\a i}=\mu\la_i$, where $\mu$ is a function. Since
$\la_i\neq0$, we find from \eqref{eee} that
\begin{equation}\label{ei1}
  \la_{i,j}=\mu\,g_{ij}+K\,a_{ij}.
\end{equation}
Differentiating (\ref{si11}b) and considering \eqref{lev}, \eqref{si1}, \eqref{si11}, it
is easy to get the following equation:
\begin{equation}\label{f10}
\psi_{ij}\equiv\psi_{i,j}-\psi_i\psi_j=\bar K\,g_{ij}-K\bar g_{ij},
\end{equation}  
where $\bar K$ is a function. Then from (\ref{rr}b), by virtue of the last
relation, and considering $R_{ij}=-K\,(n-1)\,g_{ij}$, we get that
$\bar R_{ij}=(n-1)\bar K\,\bar g_{ij}$. Hence \vnn\ is an Einstein
space. The theorem is proved.
\end{proof}

Theorem \ref{thei} was proved ``locally'' but it is easy to show that
when the domain of validity of equations \eqref{f10} borders with a
domain where $\psi_i\equiv0$, then in this domain
$\psi_i\equiv0$. Assume a point $x_0$ on the borders between these
domains, then $\psi_i(x_0)=0$ and $\psi_{ij}=0$. Indeed {\bf a)} If
$K\neq0$ or $\bar K\neq0$ then $\bar g_{ij}(x_0)=\bar K/K\
g_{ij}(x_0)$. From these properties follows that the system of
equations \eqref{lev} and \eqref{f10} has a unique solution $\bar g_{ij}=\bar K/K\
g_{ij}$ and $\psi_i=0$.  {\bf b)}~If $K=\bar K=0$ then equations
\eqref{f10}: $\psi_{i,j}=\psi_i\psi_j$ have a unique solution for
$\psi_i(x_0)=0$: $\psi_i=0$.

This Theorem was used for geodesic mappings of 4-dimensional Einstein
spaces (Mike\v s, Kiosak \cite{miki82}) and to find metrics of
Einstein spaces that admit geodesic mappings (Formella, Mike\v s
\cite{formik}), etc.

\end{document}